\title{Real-Time Quadrotor Trajectory Optimization with Time-Triggered Corridor Constraints }
\author{Yue Yu \footnote{Postdoctoral research fellow, Oden Institute of Computational Sciences and Engineering; yueyu@utexas.edu
.} and Kartik Nagpal\footnote{Research assistant, Oden Institute of Computational Sciences and Engineering; kartiknagpal@utexas.edu
.}}
\affil{The University of Texas at Austin, Austin, TX 78712}
\author{Skye Mceowen \footnote{Ph.D. Candidate, The Department of Aeronautics and Astronautics; skye95@uw.edu.} and Beh\c{c}et~A\c{c}\i kme\c{s}e\footnote{Professor, The Department of Aeronautics and Astronautics; behcet@uw.edu.}}
\affil{University of Washington, Seattle, WA 98195}
\author{Ufuk Topcu\footnote{Associate Professor, Oden Institute of Computational Sciences and Engineering; utopcu@utexas.edu}}
\affil{The University of Texas at Austin, Austin, TX 78712.}
\newcommand{\diag}{\mathop{\rm diag}}
\newcommand{\norm}[1]{\left\lVert#1\right\rVert}
\newcommand{\mnorm}[1]{{\left\vert\kern-0.25ex\left\vert\kern-0.25ex\left\vert #1 
    \right\vert\kern-0.25ex\right\vert\kern-0.25ex\right\vert}}
\newtheorem{remark}{Remark}
\newtheorem{proposition}{Proposition}
\newtheorem{assumption}{Assumption}
\newcommand{\ie}{{\it i.e.}}
\begin{document}

\maketitle

\begin{abstract}
One of the keys to flying quadrotors is to optimize their trajectories within the set of collision-free corridors. These corridors impose nonconvex constraints on the trajectories, making real-time trajectory optimization challenging. We introduce a novel numerical method that approximates the nonconvex corridor constraints with time-triggered convex corridor constraints. This method combines bisection search and repeated infeasibility detection. We further develop a customized C++ implementation of the proposed method, based on a first-order conic optimization method that detects infeasibility and exploits problem structure. We demonstrate the efficiency and effectiveness of the proposed method using numerical simulation on randomly generated problem instances as well as indoor flight experiments with hoop obstacles. Compared with mixed integer programming, the proposed method is about 50--200 times faster.  
\end{abstract}

\section*{Nomenclature}

{\renewcommand\arraystretch{1.0}
\noindent\begin{longtable*}{@{}l @{\quad=\quad} l@{}}
\multicolumn{2}{@{}l}{Sets}\\
\(\mathbb{N}\)  & the set of positive integers \\
\(\mathbb{R}, \mathbb{R}_+\) &  the set of real and non-negative real numbers \\
\(\mathbb{H}_i\) & the set of feasible position vectors in the \(i\)-th corridor \\
\(\mathbb{V}\) & the set of feasible velocity vectors\\
\(\mathbb{U}_a\) & the set of thrust vectors with pointing direction and magnitude upper bound constraints\\
\(\mathbb{U}_b\) & the set of thrust vectors with magnitude lower bound\\
\(\mathbb{W}\) & the set of feasible thrust rate\\

\multicolumn{2}{@{}l}{Parameters}\\
\(m\) & quadrotor mass\\
\(g\) & acceleration vector caused by gravity\\
\(\Delta\) & sampling time period\\
\(\omega\) & weighting parameter for thrust rates\\
\(c_i, d_i, \rho_i, \eta_i\) & center coordinates, direction vector, radius, and length of the \(i\)-th cylindrical corridor\\
\(\xi\) & maximum speed\\
\(\underline{\gamma}, \overline{\gamma}, \theta\) & minimum thrust magnitude, maximum thrust magnitude, and maximum tilting angle\\
\(\delta\) & maximum thrust rate\\
\(\overline{r}_0, \overline{v}_0\) & initial position and initial velocity of the quadrotor\\
\(\overline{r}_f, \overline{v}_f, \overline{u}_f\) & final position, final velocity, and final thrust of the quadrotor\\
\multicolumn{2}{@{}l}{Variables}\\
\(r_k, v_k, u_k\) & position, velocity, thrust of the quadrotor at time \(k\Delta\)\\
\(\tau_i\) & the length of the trajectory segment for the \(i\)-th corridor\\
\(t\) & total length of trajectory\\
\(b_{ik}\) & binary variable, takes value \(1\) if the quadrotor is in the \(i\)-th corridor at time \(k\Delta\)
\end{longtable*}}

\section{Introduction}
\label{sec: intro}

One of the keys to flying quadrotors in a dynamically changing environment is to optimize their trajectories subject to dynamics and collision-avoidance constraints in real-time \cite{elmokadem2021towards,lan2021survey}. Along such a trajectory, the position of the quadrotor needs to stay within a set of \emph{collision-free corridors}. Each corridor is a bounded convex flight space; the union of all these corridors form a nonconvex pathway connecting the quadrotor's current position to its target position \cite{ioan2019obstacle,ioan2020navigation}; see Fig~\ref{fig: corridor} for a simple illustration. To avoid collisions with obstacles whose positions change rapidly or uncertain, it is critical to update these corridors in real-time. As a result, one needs to optimize trajectories subject to nonconvex corridor constraints in real-time: the faster the optimization, the faster the quadrotor can react to sudden changes of the obstacles. 

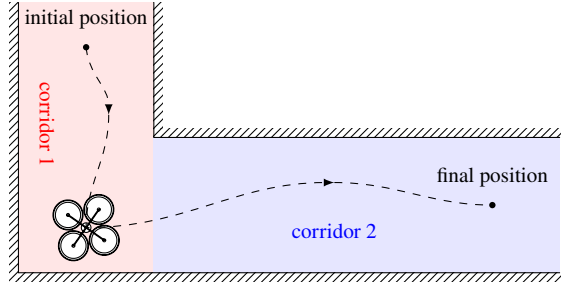
\begin{figure}[!ht]
	\centering
	\begin{tikzpicture}[scale=0.6]
		
		\coordinate (O1) at (0, 0);
		\coordinate (O2) at (0, -3);
		\coordinate (O3) at (12, -3);
		\coordinate (O4) at (12, 0);
		\coordinate (O5) at (3, 0);
		\coordinate (O6) at (3, 3);
		\coordinate (O7) at (0, 3);
		\coordinate (O8) at (3, -3);
		\coordinate (x0) at (1.5, 2);
		\coordinate (xf) at (10.5, -1.5);
		\coordinate (quad) at (1.5, -2);
		\coordinate (mid1) at (2, 0.5);
		\coordinate (mid2) at (7, -1);
		
		\fill[red!10] (O2) -- (O8) -- (O6) --(O7) --cycle;
		\node[label={[red, rotate=-90]below:\footnotesize corridor 1}] at (1, 0.5) {};
		
		\fill[blue!10] (O8) -- (O3) -- (O4) --(O5) --cycle;
		\node[label={[blue]below:\footnotesize corridor 2}] at (7, -1.5) {};
		
		\draw (O7) -- (O2) -- (O3);
		\draw (O4) -- (O5) -- (O6);
		
		\pattern[pattern=north east lines] ($(O7)+(-0.2, 0)$) rectangle ($(O2)+(0, -0.2)$);
		
		\pattern[pattern=north east lines] ($(O2)+(0, -0.2)$) rectangle (O3);
		
		\pattern[pattern=north east lines] ($(O4)+(0, 0.2)$) rectangle ($(O5)+(0, 0)$);
		
		\pattern[pattern=north east lines] ($(O5)+(0.2, 0.2)$) rectangle (O6);
		
		\fill (x0) circle [radius=2pt];
		\node[label=above:{\footnotesize initial position}] at (x0) {};
		
 		\fill (xf) circle [radius=2pt];
 		\node[label=above:{\footnotesize final position}] at (xf) {};
		
		\node [quadcopter top,fill=white,draw=black,minimum width=1cm,rotate=10,scale=0.8] at (quad) {}; 
		
		\draw[dashed, -latex] (x0) to[out=-85,in=90] (mid1);
		
		\draw[dashed, -latex] (mid1) to[out=-90,in=90] (quad) to[out=0,in=180] (mid2);
		
		\draw[dashed] (mid2) to[out=0,in=180] (xf);

       	\end{tikzpicture}
		\caption{A quadrotor flight trajectory with corridor constraints.}
		\label{fig: corridor}
\end{figure}

Since the flight space defined by the union of the corridors is nonconvex, optimizing the trajectories for the quadrotor is computationally challenging. One standard solution approach is \emph{mixed integer programming} \cite{grossmann2002review,richards2005mixed,ioan2020mixed}, which first uses binary variables to describe the union of all corridors, then optimizes quadrotor trajectories together with these binary variables \cite{richards2002aircraft,mellinger2012mixed,tang2015mixed,landry2016aggressive}. However, the worst-case computation time of this approach increases exponentially as the number of binary variables increases. As a result, even with the state-of-the-art solvers--such as GUROBI \cite{gurobi}--real-time quadrotor trajectory optimization via mixed integer programming is still difficult, if at all possible. Alternatively, one can model the corridor constraints as smooth nonconvex constraints and solve the resulting trajectory optimization using the successive convexification method \cite{mao2018successive}. But this approach suffers from slow computation speed \cite{szmuk2019real}, and requires careful parameter tuning to ensure the desired algorithm convergence \cite{malyuta2021convex}. 

Recently, there has been an increasing interest in approximating the nonconvex corridor constraints with time-triggered constraints, where each convex corridor is activated only within one time interval \cite{mellinger2011minimum,yu2014energy,deits2015efficient,watterson2015safe,liu2016high,janevcek2017optiplan,liu2017planning,mohta2018fast,gao2018optimal}. These approximations make the resulting trajectory optimization convex and thus computationally more tractable. However, the existing results have the following limitations. First, they only consider polytopic constraints on trajectory variables, such as elementwise upper and lower bounds on the velocity and acceleration of the quadrotor. These polytopic constraints do not accurately capture the geometric structure of many practical operational constraints--such as the magnitude and pointing direction constraint of the thrust vector \cite{szmuk2017convexification,szmuk2018real,szmuk2019real}--and flight corridors with nonpolytopic boundaries--such as cylindrical or spherical corridors. Second, to our best knowledge, none of the existing methods explicitly test whether the resulting trajectory optimization is feasible. Consequently, the resulting trajectory optimization can be close to infeasible, in which case, a numerical solver will fail to provide a solution; or the trajectory optimization can be far away from being infeasible, which can cause conservative trajectories with unnecessarily long time of flight.  

We introduce a novel bisection method that approximates the nonconvex corridor constraints using time-triggered convex corridor constraints, and develop customized implementation of this method that enables real-time quadrotor trajectory optimization subject to general second-order constraints. Our contributions are as follows. 
\begin{enumerate}
    \item We theoretically prove that nonconvex corridor constraints are equivalent to time-varying convex corridor constraints, provided that an optimal triggering time for each corridor is known.
    \item We propose a novel bisection method to estimate the optimal triggering time via repeated infeasibility detection in conic optimization. This method systematically reduces the trajectory length while ensuring that the resulting trajectory optimization is feasible up to a given tolerance. The estimated triggering time reduces a nonconvex trajectory optimization problem to a sequence of convex ones.
    \item We develop a customized C++ trajectory optimization solver based on the bisection method. This solver automatically detects infeasibility and exploits the sparsity and geometric structure of trajectory optimization by implementing the proportional-integral projected gradient method (PIPG), an efficient first-order primal-dual conic optimization method.
    \item We demonstrate the application of the proposed bisection method using numerical simulation and indoor flight experiments. Compared with mixed integer programming, the proposed bisection method and C++ solver shows 50--200 times speedups at the price of an increase in the cost function value by less than 10\% .
\end{enumerate}

The implications of our work are threefold. First, our work sets a new benchmark for real-time quadrotor trajectory optimization, which significantly improves the mixed integer programming approach in terms of computation time. Second, our work provides a fresh perspective to deal with nonconvexity in collision avoidance for general autonomous vehicles using bisection search and infeasibility detection. Third, our work demonstrates the potential of PIPG--and in general, first-order optimization methods--in solving nonconvex optimal control problems via not only numerical simulation but also flight experiments.    

\paragraph*{Notation} Given a real number \(\alpha\in\mathbb{R}\), we let \(\lfloor \alpha\rfloor\) denote the largest integer lower bound of \(\alpha\), and \(\lceil\alpha \rceil\) denote the smallest integer upper bound of \(\alpha\). Given a vector \(r\) and a matrix \(M\), we let \(\norm{r}\) denote the \(\ell_2\)-norm of vector \(r\), \([r]_j\) denote the \(j\)-th element of vector \(r\), and \(\mnorm{M}\) denote the largest singular value of matrix \(M\). We let \(1_n\) and \(0_n\) denote the \(n\)-dimensional vector whose entries are all 1's and all 0's, respectively. We let \(0_{m\times n}\) denote the \(m\times n\) zero matrix, and \(I_n\) denote the \(n\times n\) identity matrix. Given a closed convex cone \(\mathbb{K}\), we let \(\mathbb{K}^\circ\) denote its polar cone. Given \(i, j\in\mathbb{N}\) with \(i< j\) and \(a_i, a_{i+1}, \ldots, a_{j-1}, a_j\in\mathbb{R}^n\), we let \(a_{[i, j]}\coloneqq \begin{bmatrix}
a_i^\top & a_{i+1}^\top & \ldots & a_{j-1}^\top & a_j^\top
\end{bmatrix}^\top\). We say an constrained optimization is \emph{feasible} if its constraints can be satisfied, and \emph{infeasible} otherwise.

\section{Three-degree-of-freedom dynamics model for quadrotors}
\label{subsec: quad model}

Trajectory optimization for a dynamical system requires a mathematical model that predicts the future state of the system given its current state and input. We introduce a quadrotor dynamics model with three-degrees-of-freedom (3DoF), along with various constraints on the position, velocity, thrust, and thrust rate of the quadrotor. This model lays the foundation of the trajectory optimization problem in the next section. 

\subsection{Three degree-of-freedom dynamics}

We consider a 3DoF dynamics model for a quadrotor. In particular, at time \(s\in\mathbb{R}_+\), we let \(r(s)\in\mathbb{R}^3\) and \(v(s)\in\mathbb{R}^3\) denote the position and velocity of the center of mass of the quadrotor, and \(u(s)\in\mathbb{R}^3\) denote the total thrust force provided by the propellers. Furthermore, we let \(m\in\mathbb{R}_+\) and \(g=\begin{bmatrix}0 & 0 & -9.81\end{bmatrix}^\top\) denote the mass of the quadrotor and the acceleration vector caused by gravity, respectively. The 3DoF continuous-time dynamics model for quadrotor dynamics is described by the following set of differential equations:
\begin{equation}\label{sys: CT}
\begin{aligned}
\frac{d}{ds}r(s)&=v(s),\\ \frac{d}{ds}v(s)&=\frac{1}{m}u(s)+g.
\end{aligned}
\end{equation}

We discretize the above continuous-time differential equation using a first-order-hold scheme. Particularly, we let \(\Delta\in\mathbb{R}_+\) denote the discretization step size. Let
\begin{equation}
   r_k\coloneqq r(k\Delta),\enskip v_k\coloneqq v(k\Delta),\enskip u_k\coloneqq u(k\Delta),  
\end{equation} 
for all \(k\in\mathbb{N}\).
We apply a piecewise linear input thrust such within each \(\Delta\in\mathbb{R}_+\) time interval, \ie, 
\begin{equation}
     u(s)=\left(k+1-\frac{s}{\Delta}\right)u_k+\left(\frac{s}{\Delta}-k\right)u_{k+1},
\end{equation}
for all \(k\Delta\leq s\leq (k+1)\Delta\). Under this assumption, the equations in \eqref{sys: CT} are equivalent to the following:
\begin{equation}\label{sys: DT}
    \begin{aligned}
    r_{k+1}=&  r_k+\Delta v_k+\frac{\Delta^2}{3 m}\left(u_k+\frac{1}{2}u_{k+1}\right)+\frac{\Delta^2}{2}g,\\
    v_{k+1}=&  v_k+\frac{\Delta}{2m}(u_k+u_{k+1})+\Delta g,   
    \end{aligned}
\end{equation}
for all \(k\in\mathbb{N}\).

\subsection{Position, velocity, and thrust constraints}

The position, velocity, and thrust vector of the quadrotor are subject to the following constraints.

\subsubsection{Position}
The quadrotor's position is constrained within the union of a set of three-dimensional cylinders, or \emph{corridors}. We let \(l\in\mathbb{N}\) denote the total number of corridors. For the \(i\)-th corridor, we let \(c_i\in\mathbb{R}^3\) denote its center, \(d_i\in\mathbb{R}^3\) with \(\norm{d_i}=1\) denote its direction vector, \(\eta_i\in\mathbb{R}_+\) and \(\rho_i\in\mathbb{R}_+\) denote its half-length and radius, respectively. See Fig.~\ref{fig: cylinder corridor} for an illustration.  We define the \(i\)-th corridor as follows:
\begin{equation}\label{eqn: cylinder}
    \mathbb{H}_i\coloneqq \{c_i+r\in\mathbb{R}^3|\norm{r-\langle d_i, r\rangle d_i}\leq \rho_i, |\langle d_i, r\rangle|\leq \eta_i\}.
\end{equation}

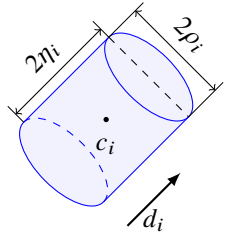
\begin{figure}[!ht]
    \centering
    \begin{tikzpicture}[scale=1]
    \coordinate (o) at (0, 0);
    \coordinate (a) at (45:0.8);
    \coordinate (b) at (225:0.8);
    \coordinate (a1) at ($(a)+(135:0.75)$);
    \coordinate (a2) at ($(a)+(-45:0.75)$);
    \coordinate (b1) at ($(b)+(135:0.75)$);
    \coordinate (b2) at ($(b)+(-45:0.75)$);
    \coordinate (b3) at ($(b)+(-45:1.2)$);
    
    \draw [blue,fill=blue!5,rotate around={-45:(0,0)}] (a1) -- (b1) arc (180:360:0.75 and 0.35) -- (a2) arc (0:180:0.75 and 0.35);
    
    \draw[blue,dashed,rotate around={-45:(0,0)}] (b2) arc (0:180:0.75 and 0.35);
    \draw[blue,rotate around={-45:(0,0)}] (a1) arc (180:360:0.75 and 0.35);
    
    \fill (o) circle [radius=1pt];
    \node[label=below:{ $c_i$}] at (o) {};
    
    \draw[very thin] (a1) -- ($(a1)+(135:0.3)$);
    \draw[very thin] (b1) -- ($(b1)+(135:0.3)$);
    
    \draw[<->] ($(a1)+(135:0.15)$) --  ($(b1)+(135:0.15)$) node [rotate=45, midway, above] { $2\eta_i$};
    
    \draw[very thin] (a1) -- ($(a1)+(45:0.65)$);
    \draw[very thin] (a2) -- ($(a2)+(45:0.65)$);
    \draw[dashed] (a1) -- (a2);
    
    \draw[<->] ($(a1)+(45:0.5)$) --  ($(a2)+(45:0.5)$) node [rotate=-45, midway, above] { $2\rho_i$};
    
    \draw[-latex, thick] (b3) -- ($(b3) + (45: 1)$) node [midway, below] { $d_i$};
    
\end{tikzpicture}
    \caption{A illustration of the set \(\mathbb{H}_i\) in \eqref{eqn: cylinder}.}
    \label{fig: cylinder corridor}
\end{figure}

\subsubsection{Velocity}
The quadrotor's speed is upper bounded by \(\xi\in\mathbb{R}_+\). The set of feasible velocity vectors is as follows:
\begin{equation}\label{eqn: v ball}
    \mathbb{V}\coloneqq \{v\in\mathbb{R}^3| \norm{v}\leq \xi\}.
\end{equation}

\subsubsection{Thrust}
The thrust vectors of the quadrotor are subject to the following two different set of constraints: magnitude constraints and direction constraints.
\paragraph{Magnitude constraints} The Euclidean norm of the thrust vector is upper bounded by \(\overline{\gamma}\in\mathbb{R}_+\), and the thrust along the direction opposite to the gravity is lower bounded by \(\underline{\gamma}\in\mathbb{R}_+\).
\paragraph{Direction constraints} The direction of the thrust vector is constrained as follows: the angle between the thrust direction and the the direction opposite to the gravity is no more than a fixed angle \(\theta\in[0, \frac{\pi}{2}]\).

The above constraints on the thrust magnitude and direction ensure that the on-board motors can provide the thrust needed, and the tilting angle of the quadrotor is upper bounded. See Fig.~\ref{fig: tilting} for an illustration of the tilting angle.
\begin{figure}[!ht]
    \centering
    \begin{tikzpicture}
    \coordinate (a) at (0, 0);
    
    \node [quadcopter side,fill=white,draw=black,minimum width=3.5cm,rotate=30,scale=1] at (a) {}; 
    
    \draw[red, -latex] (a) -- node[right] {\small gravity} ($(a)+(0, -1.2)$) ;
    
    \draw[dashed] (a) -- ($(a)+(90:1.3)$);
    \draw[blue, -latex] (a) -- node[left] {\small thrust} ($(a)+(120:1.5)$);
    
    \draw[latex-] ($(a)+(90:1)$) arc (90:105:1) node[above]{$\theta$};
    \draw[-latex] ($(a)+(105:1)$) arc (105:120:1);
    
    \fill (a) circle [radius=1pt] {}; 
     
\end{tikzpicture}
    \caption{Tilting angle \(\theta\) of the quadrotor.}
    \label{fig: tilting}
\end{figure}
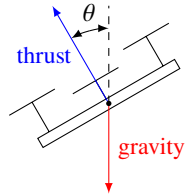
By combining the aforementioned constraints, we define the set of feasible thrust vectors as the intersection of the following two sets:
\begin{subequations}\label{eqn: u muffin}
\begin{align}
    \mathbb{U}_a \coloneqq & \{u\in\mathbb{R}^3| \norm{u}\leq \overline{\gamma}, \cos\theta \norm{u}\leq [u]_3\},\label{eqn: u icecream}\\
    \mathbb{U}_b \coloneqq & \{u\in\mathbb{R}^3| [u]_3\geq \underline{\gamma}\}.\label{eqn: u lower bound}
\end{align}
\end{subequations}

\subsubsection{Thrust rate} The difference between two consecutive thrust vectors, termed a \emph{thrust rate vector}, is subject to an upper bound of \(\delta\in\mathbb{R}_+\) on its Euclidean norm. The set of all feasible thrust rate vectors is as follows:  
\begin{equation}\label{eqn: w ball}
    \mathbb{W}\coloneqq \{w\in\mathbb{R}^3| \norm{w}\leq \delta\}.
\end{equation}
The constraints in \eqref{eqn: w ball} prevents large changes in the thrust vector within a \(\Delta\)-seconds time interval, hence ensuring the smoothness of the thrust trajectory. 

\section{Trajectory optimization with time-triggered constraints}
\label{sec: trigger}

We will introduce the quadrotor trajectory optimization with time-triggered corridor constraints. To this end, we will first consider the trajectory optimization with nonconvex corridor constraints, then propose an approximate problem that replaces these nonconvex corridor constraints with convex ones. 

\subsection{Trajectory optimization with nonconvex corridor constraints}

We will introduce a trajectory optimization problem subject to nonconvex corridor constraints. In this problem, we use the quadrotor dynamics in \eqref{sys: DT}. We let \(t\in\mathbb{N}\) denote the total length of the trajectory. We let \(\overline{r}_0, \overline{v}_0\in\mathbb{R}^3\) denote the known initial position and initial velocity of the quadrotor, respectively. Similarly, we let \(\overline{r}_f, \overline{v}_f, \overline{u}_f\in\mathbb{R}^3\)
denote the known final position, final velocity, and final thrust of the quadrotor, respectively. We will use the set \(\mathbb{V}\), \(\mathbb{U}\) and \(\mathbb{W}\) defined in \eqref{eqn: v ball}, \eqref{eqn: u muffin}, and \eqref{eqn: w ball}, respectively. We let \(\{\mathbb{H}_1, \mathbb{H}_2, \ldots, \mathbb{H}_l\}\) denote a sequence of corridors, where \(\mathbb{H}_i\) is defined by \eqref{eqn: cylinder} for all \(i\in[1, l]\). 

We now introduce the following quadrotor trajectory optimization with nonconvex state constraints, where \(\omega\in\mathbb{R}_+\) is a weight scalar for the cost for the thrust rates: by changing the value of \(\omega\), one can obtain different trade-offs between cost for the thrust and the thrust rates.

\vspace{1em}
\noindent\fbox{%
\centering
\parbox{0.96\linewidth}{%
{\bf{Trajectory optimization with nonconvex corridor constraints}}
\begin{equation}\label{opt: nonconvex}
    \begin{array}{ll}
        &  \underset{u_{[0, t]}}{\mbox{minimize}} \enskip \frac{1}{2}\sum_{k=0}^{t} \norm{u_k}^2+\frac{\omega}{2}\sum_{k=0}^{t-1} \norm{u_{k+1}-u_k}^2\\
        &\mbox{subject to} \\ &r_{k+1}=r_k+\Delta v_k+\frac{\Delta^2}{3 m}(u_k+\frac{1}{2}u_{k+1})+\frac{\Delta^2}{2}g,\\
        &v_{k+1}=v_k+\frac{\Delta}{2m}(u_k+u_{k+1})+\Delta g, \enskip \forall k\in[0, t-1],\\
       & u_{k+1}-u_k\in\mathbb{W}, \enskip \forall k\in[0, t-1],\\
        &u_k\in\mathbb{U}_a\cap \mathbb{U}_b,\enskip v_k\in\mathbb{V}, \enskip r_k\in\bigcup_{i=1}^{l}\mathbb{H}_i,\enskip \forall k\in[0, t],\\
        & r_0=\overline{r}_0, \enskip v_0=\overline{v}_0,\enskip  r_t=\overline{r}_f, \enskip v_t=\overline{v}_f, \enskip u_t=\overline{u}_f.
    \end{array}
\end{equation}
}%
}
\vspace{1em}

Optimization \eqref{opt: nonconvex} is equivalent to a mixed integer optimization problem. To see this equivalence, notice that optimization~\eqref{opt: nonconvex} contains the following constraints:
\begin{equation}\label{eqn: noncvx corridor}
    r_k\in\bigcup\limits_{i=1}^{l}\mathbb{H}_i, \enskip \forall k\in[0, t].
\end{equation}
The constraints in \eqref{eqn: noncvx corridor} are equivalent to the following set of constraints with binary variables:
\begin{subequations}\label{eqn: mixed-integer corridor}
\begin{align}
&\norm{(r_k-c_i)-\langle d_i, r_k-c_i\rangle}\leq b_{ik}\rho_i+\mu(1-b_{ik}),\label{eqn: big M cylinder 1}\\
& |\langle d_i, r_k-c_i\rangle|\leq b_{ik}\eta_i+\mu(1-b_{ik}),\label{eqn: big M cylinder 2}\\
& b_{ik}\in\{0, 1\}, \enskip \sum_{i=1}^{l} b_{ik}\geq 1, \enskip  \forall k\in[0, t], i\in[1, l],\label{eqn: binary}
\end{align}
\end{subequations}
where \(\mu\in\mathbb{R}_+\) denotes a very large positive scalar. Indeed, if \(b_{ik}=0\), then the constraints in \eqref{eqn: big M cylinder 1} and \eqref{eqn: big M cylinder 2} become redundant, since \(\mu\) is very large. On the other hand, if \(b_{ik}=1\), then the constraints in \eqref{eqn: big M cylinder 1} and \eqref{eqn: big M cylinder 2} imply that \(r_k\in\mathbb{H}_i\). Finally, the constraints in \eqref{eqn: binary} in \eqref{eqn: binary} ensure that there exists \(i\in[1, l]\) such that \(b_{ik}=1\), hence \(r_k\in\mathbb{H}_i\) for some \(i\in[1, l]\). Therefore, the constraints in \eqref{eqn: noncvx corridor} and \eqref{eqn: mixed-integer corridor} are equivalent. 

Since optimization \eqref{opt: nonconvex} is equivalent to a mixed-integer optimization, the computation time for solving optimization \eqref{opt: nonconvex} increases exponentially as the number of integer variables--in this case, jointly determined by the trajectory length \(\tau\) and number of corridors \(l\)--increases. Consequently, a real-time solution method is only possible if the values of \(\tau\) and \(n\) are both sufficiently small. 

\subsection{Trajectory optimization with time-triggered corridor constraints}

We will show that optimization~\eqref{opt: nonconvex} takes a simpler form if we know \emph{a priori} the sequence of corridors that the optimal trajectory traverses. To this end, we start with the following assumption on the ordering of corridor sequence \(\{\mathbb{H}_1, \mathbb{H}_2, \ldots, \mathbb{H}_l\}\).

\begin{assumption}\label{asp: trigger}
Suppose optimization~\eqref{opt: nonconvex} is feasible. Let \(u_{[0, t]}\) be an optimal thrust trajectory for optimization \eqref{opt: nonconvex}, and \(r_{[0, t]}\) and \(u_{[0, t]}\) satisfy the constraints in optimization \eqref{opt: nonconvex}. There exists \(\tau_1, \tau_2, \ldots, \tau_l\in\mathbb{R}_{+}\) such that \(t=\sum_{j=1}^l\tau_j\) and \(r_k\in\mathbb{H}_i\) for all \(k\in[\sum_{j=1}^{i-1}\tau_j, \sum_{j=1}^i\tau_j]\) and \(i\in[1, l]\), where \(\sum_{j=1}^0\tau_j\coloneqq 0\).
\end{assumption}

For Assumption~\ref{asp: trigger} to hold, we need to know \emph{a priori} the ordered sequence of corridors that order in which the optimal trajectory traverses. Many corridor generating algorithms, such as convex lifting, can provide such an ordered sequence of corridors; see \cite{ioan2019obstacle,ioan2020navigation} for some recent examples.

Assumption~\ref{asp: trigger} also implies that no corridor appeared more than once along the optimal corridor path. Since reentering the same corridor twice will increase the value of the objective function in optimization~\eqref{opt: nonconvex}, such an implication always holds in practice.

Under Assumption~\ref{asp: trigger}, it is tempting to replace the nonconvex corridor constraints in \eqref{eqn: noncvx corridor} with time-varying constraints. After this replacement, optimization~\eqref{opt: nonconvex} becomes the following optimization in \eqref{opt: trigger}.

\vspace{1em}
\noindent\fbox{%
\centering
\parbox{0.96\linewidth}{
{\bf{Trajectory optimization with time-triggered corridor constraints}}
\begin{equation}\label{opt: trigger}
    \begin{array}{ll}
        &  \underset{u_{[0, t]}}{\mbox{minimize}} \enskip \frac{1}{2}\sum_{k=0}^t \norm{u_k}^2+\frac{\omega}{2}\sum_{k=0}^{t-1} \norm{u_{k+1}-u_k}^2\\
        &\mbox{subject to} \\ &r_{k+1}=r_k+\Delta v_k+\frac{\Delta^2}{3 m}(u_k+\frac{1}{2}u_{k+1})+\frac{\Delta^2}{2}g,\\
        &v_{k+1}=v_k+\frac{\Delta}{2m}(u_k+u_{k+1})+\Delta g, \enskip \forall k\in[0, t-1],\\
        & u_{k+1}-u_k\in\mathbb{W}, \enskip \forall k\in[0, t-1],\\
        &u_k\in\mathbb{U}_a\cap\mathbb{U}_b,\enskip v_k\in\mathbb{V},\enskip \forall k\in[0, t],\\
        & r_k\in\mathbb{H}_i, \enskip \forall k\in[\sum_{j=1}^{i-1}\tau_j, \sum_{j=1}^i\tau_j], \enskip i\in[1, l],\\
        & r_0=\overline{r}_0, \enskip v_0=\overline{v}_0,\enskip  r_t=\overline{r}_f, \enskip v_t=\overline{v}_f, \enskip u_t=\overline{u}_f.
    \end{array}
\end{equation}
}%
}
\vspace{1em}

The following proposition shows that, under Assumption~\ref{asp: trigger}, solving optimization~\eqref{opt: trigger} is equivalent to solving optimization~\eqref{opt: nonconvex}.

\begin{proposition}\label{prop: trigger}
Suppose that Assumption~\ref{asp: trigger} holds. If \(u^\star_{[0, t]}\) is an optimal thrust trajectory for optimization~\eqref{opt: trigger}, then \(u^\star_{[0, t]}\) is an an optimal thrust trajectory for optimization~\eqref{opt: nonconvex}. 
\end{proposition}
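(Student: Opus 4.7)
The plan is to argue via a two-way comparison between the feasible sets and optimal values of optimizations \eqref{opt: nonconvex} and \eqref{opt: trigger}. The objective functions are identical, so it suffices to show that (i) every trajectory feasible for \eqref{opt: trigger} is feasible for \eqref{opt: nonconvex}, and (ii) there exists an optimal trajectory of \eqref{opt: nonconvex} that is feasible for \eqref{opt: trigger}. Together, these two facts force the two optimal values to coincide, and any optimizer of \eqref{opt: trigger} must then be an optimizer of \eqref{opt: nonconvex}.

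First I would establish direction (i): assume $u_{[0,t]}$ together with the induced $r_{[0,t]}, v_{[0,t]}$ satisfy the constraints of \eqref{opt: trigger}. Then for each $k\in[0,t]$ there is an index $i\in[1,l]$ with $k\in[\sum_{j=1}^{i-1}\tau_j,\sum_{j=1}^i\tau_j]$, which gives $r_k\in\mathbb{H}_i\subseteq\bigcup_{i=1}^l\mathbb{H}_i$. All other constraints in \eqref{opt: trigger} appear verbatim in \eqref{opt: nonconvex}, so the trajectory is feasible for \eqref{opt: nonconvex}. This shows that the optimal value of \eqref{opt: trigger} is at least the optimal value of \eqref{opt: nonconvex}.

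Next I would establish direction (ii) using Assumption~\ref{asp: trigger} directly. The assumption supplies an optimal thrust trajectory $u_{[0,t]}$ of \eqref{opt: nonconvex} whose induced state trajectory $r_{[0,t]}$ satisfies $r_k\in\mathbb{H}_i$ for all $k\in[\sum_{j=1}^{i-1}\tau_j,\sum_{j=1}^i\tau_j]$ and all $i\in[1,l]$. This is exactly the time-triggered corridor constraint of \eqref{opt: trigger}, and the remaining constraints are shared with \eqref{opt: nonconvex}. Therefore this trajectory is feasible for \eqref{opt: trigger}, so the optimal value of \eqref{opt: trigger} is at most the optimal value of \eqref{opt: nonconvex}.

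Combining (i) and (ii), the two optimal values are equal. If $u^\star_{[0,t]}$ solves \eqref{opt: trigger}, then by (i) it is feasible for \eqref{opt: nonconvex} and attains its optimal value, so it solves \eqref{opt: nonconvex}. There is no real obstacle here: the argument is essentially a ``constraint tightening that is tight at the optimum'' observation, and all the analytical content has been packed into Assumption~\ref{asp: trigger}, whose role in the proof is precisely to guarantee that the tightening from $\bigcup_i\mathbb{H}_i$ to the prescribed time-indexed $\mathbb{H}_i$ does not rule out any optimizer of \eqref{opt: nonconvex}.
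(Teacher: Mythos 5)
Your proposal is correct and follows essentially the same argument as the paper: a two-sided comparison in which any feasible point of \eqref{opt: trigger} is feasible for \eqref{opt: nonconvex} (so its objective value is at least the nonconvex optimum), while Assumption~\ref{asp: trigger} supplies an optimal trajectory of \eqref{opt: nonconvex} that satisfies the time-triggered constraints (so the optimum of \eqref{opt: trigger} is no larger), forcing the optimal values to coincide and any optimizer of \eqref{opt: trigger} to solve \eqref{opt: nonconvex}. The only cosmetic difference is that you state the first direction as a general feasible-set inclusion, whereas the paper applies it directly to the specific optimizers \(u^\star_{[0,t]}\) and \(u_{[0,t]}\) and compares \(\phi\)-values.
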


\begin{proof}
Since Assumption~\eqref{asp: trigger} holds, optimization~\eqref{opt: nonconvex} has at least one optimal solution, and so does optimization~\eqref{opt: trigger}. Let \(u^\star_{[0, t]}\) be an optimal solution for optimization~\eqref{opt: trigger}, \(u_{[0, t]}\) be an optimal solution for optimization \eqref{opt: nonconvex}, \(\phi(u_{[0, t]})=\frac{1}{2}\sum_{k=0}^t \norm{u_k}^2+\frac{\omega}{2}\sum_{k=0}^{t-1} \norm{u_{k+1}-u_k}^2\), and \(\phi(u^\star_{[0, t]})=\frac{1}{2}\sum_{k=0}^t \norm{u_k^\star}^2+\frac{\omega}{2}\sum_{k=0}^{t-1} \norm{u_{k+1}^\star-u_k^\star}^2\).

First, since trajectory \(u_{[0, t]}\) also satisfies the constraints in \eqref{opt: trigger} and \(u^\star_{[0, t]}\) is optimal for optimization~\eqref{opt: trigger}, we must have
\(\phi(u^\star_{[0, t]})\leq \phi(u_{[0, t]})\).

Second, Assumption~\ref{asp: trigger} implies that \(u^\star_{[0, t]}\) also satisfies the constraints in optimization~\eqref{opt: nonconvex}. Combining this fact with the assumption that \(u_{[0, t]}\) is optimal for optimization~\eqref{opt: nonconvex}, we conclude that
\(\phi(u_{[0, t]})\leq \phi(u^\star_{[0, t]})\).

Therefore we conclude that \(u^\star_{[0, t]}\) satisfies the constraints in optimization~\eqref{opt: nonconvex} and \(\phi(u^\star_{[0, t]})=\phi(u_{[0, t]})\). Hence \(u^\star_{[0, t]}\) is also optimal for optimization~\eqref{opt: nonconvex}. 
\end{proof}

Proposition~\ref{prop: trigger} provides valuable insights in solving optimization~\eqref{opt: nonconvex}: rather than the value of the \((t+1)l\) binary variables in \eqref{eqn: mixed-integer corridor}, we only need to determine the value of \(l\) integers that determines the triggering time, given by \(\tau_1, \tau_2, \ldots, \tau_l\), in optimization~\eqref{opt: trigger}. Although computing the exact value of this sequence is as difficult as solving optimization~\eqref{opt: nonconvex} itself, one can compute a good approximation very efficiently, as we will show next.

\subsection{Computing the triggering time via bisection method}
\label{subsec: min time}

In this section, we introduce a numerical algorithm for optimization \eqref{opt: trigger} using an approximate triggering time sequence \(\tau_{[1, l]}\). To this end, we make the following assumption about optimization~\eqref{opt: trigger}.

\begin{assumption}\label{asp: bound}
There exists \(\underline{\tau}_1, \underline{\tau}_2, \ldots, \underline{\tau}_l\) and \(\overline{\tau}_1, \overline{\tau}_2, \ldots, \overline{\tau}_l\) with \(\underline{\tau}_j\leq \overline{\tau}_j\) for all \(j=1, 2, \ldots, l\), such that 1) optimization~\eqref{opt: trigger} is feasible if \(\tau_{[1, l]}=\overline{\tau}_{[0, l]}\) and \(t=\sum_{j=1}^l\overline{\tau}_j\), and 2) optimization~\eqref{opt: trigger} is infeasible if \(\tau_{[1, l]}=\underline{\tau}_{[0, l]}\) and \(t=\sum_{j=1}^l\underline{\tau}_j\).
\end{assumption}

\begin{remark}
Assumption~\ref{asp: trigger} implies that optimization~\eqref{opt: trigger} is feasible if we allocate an sufficient amount of time for each corridor, and infeasible otherwise. Using the length of each corridor and an upper and lower bounds on the average speed of the quadrotor, we can obtain an interval estimate for each corridor.
\end{remark}

Given lower and upper bound sequences that satisfy Assumption~\ref{asp: trigger}, we introduce a heuristic method, summarized in Algorithm~\ref{alg: trigger}. The idea is to first use a bisection search method to tighten the interval bounds for each corridor, one at a time. Then using these tightened upper bounds to solve optimization~\eqref{opt: trigger}.

\begin{algorithm}[!ht]
\caption{Trajectory optimization with time-triggered corridor constraints}
\begin{algorithmic}[1]
\Require Two time sequence \(\overline{\tau}_{[1, l]}\) and \(\underline{\tau}_{[1, l]}\) that satisfy Assumption~\ref{asp: bound}, positive accuracy tolerance \(\epsilon\). 
\For{\(i=1, 2, \ldots, l\)}\label{alg: trigger start}
\While{\(\overline{\tau}_i-\underline{\tau}_i>\epsilon\)}\label{alg: bisec start}
\State{\(\hat{\tau}_j=\begin{cases}
\lfloor \frac{1}{2}(\overline{\tau}_i+\underline{\tau}_i)\rfloor, & \text{if \(j=i\).}\\
\overline{\tau}_j, & \text{otherwise.}
\end{cases}\)}
\State{Let \(t=\sum_{j=1}^l\hat{\tau}_j\) and \(\tau_{[1, l]}=\hat{\tau}_{[1, l]}\) in optimization~\eqref{opt: trigger}.
}
\If{optimization \eqref{opt: trigger} is infeasible}
\State{\(\underline{\tau}_i\gets \hat{\tau}_i\)}
\Else 
\State{\(\overline{\tau}_i\gets \hat{\tau}_i\)}
\EndIf
\EndWhile\label{alg: bisec end}
\EndFor\label{alg: trigger end}
\State{Let \(\tau_{[1, l]}=\overline{\tau}_{[1, l]}\) and \(t=\sum_{j=1}^l\overline{\tau}_j\) in optimization \eqref{opt: trigger}, then solve for the optimal trajectory \(u^\star_{[0, \tau]}\).}\label{alg: solve trigger}
\Ensure \(u^\star_{[0, t]}\)
\end{algorithmic}
\label{alg: trigger}
\end{algorithm}

We note that the upper bound sequence \(\overline{\tau}_{[0, l]}\) computed by the for-loop between line~\ref{alg: trigger start} and line~\ref{alg: trigger end} in  Algorithm~\ref{alg: trigger} is not necessarily the same sequence in Assumption~\ref{asp: trigger}. Consequently the instance of optimization~\eqref{opt: trigger} solved in line~\ref{alg: solve trigger} is merely an \emph{approximation} of optimization~\eqref{opt: nonconvex}. However, such an approximation has the following attractive properties. First a feasible solution is guaranteed to exist by construction, and each convex corridor constraint is active within the corresponding time interval. Second, up to the accuracy tolerance \(\epsilon\), each element of the upper bound sequence is reduced greedily until optimization~\eqref{opt: trigger} becomes infeasible, which reduces the conservativeness of the initial estimates.

\section{Real-time conic optimization with infeasibility detection}
\label{sec: PIPG}
The key step in Algorithm~\ref{alg: trigger} is to solve optimization~\eqref{opt: trigger} if it is feasible, and prove that it is infeasible otherwise. Such a problem is also known as \emph{infeasibility detection} in constrained optimization. In this section we introduce an infeasibility detection method customized for optimization~\eqref{opt: trigger}. This method is based on the proportional-integral projected gradient method (PIPG), a primal-dual conic optimization method \cite{yu2020proportional,yu2021proportionalA,yu2021proportionalB,yu2022extrapolated}. 

\subsection{Reformulating a trajectory optimization as a conic optimization}
\label{subsec: conic}

Conic optimization is the minimization of a convex objective function subject to conic constraints. In the following, we will reformulate the trajectory optimization problem in \eqref{opt: trigger} as a special case of conic optimization. To this end, we need to rewrite the objective function and constraints in optimization~\eqref{opt: trigger} in a more compact form as follows. First, we introduce the following trajectory variable:
\begin{equation}\label{eqn: traj var}
    x\coloneqq \begin{bmatrix}
    r_{[0, t]}^\top & v_{[0, t]}^\top & u_{[0, t]}^\top & w_{[0, t-1]}^\top
    \end{bmatrix}^\top.
\end{equation}
where \(w_k\coloneqq u_{k+1}-u_k\) for all \(k\in[0, t-1]\). With this variable, we can rewrite the quadratic objective function in optimization~\eqref{opt: trigger} as follows:
\begin{equation}\label{eqn: quad obj}
\begin{aligned}
\frac{1}{2}x^\top \underbrace{\diag\left(\begin{bmatrix}
0_{6(t+1)}^\top & 1_{3(t+1)}^\top & \omega 1_{3t}^\top 
\end{bmatrix}^\top \right)}_Px.    
\end{aligned}
\end{equation}
Second, we define the following submatrices: 
\begin{equation}\label{eqn: H blocks}
\begin{aligned}
    &H_{11}=\begin{bmatrix}
    0_{3t\times 3} & I_{3t}
    \end{bmatrix}-\begin{bmatrix}
    I_{3t} & 0_{3t\times 3}
    \end{bmatrix}, \\
    &H_{12}=-\Delta \begin{bmatrix}
    I_{3t} & 0_{3t\times 3} \end{bmatrix},\enskip H_{14}=0_{3t\times 3t},\\
    & \textstyle H_{13}=-\frac{\Delta^2}{3m}\begin{bmatrix}
    0_{3t\times 3} & I_{3t}
    \end{bmatrix}-\frac{\Delta^2}{6m}\begin{bmatrix}
    I_{3t} & 0_{3t\times 3}
    \end{bmatrix},\\
    & H_{21}=0_{3t\times 3(t+1)}, \enskip H_{22}=H_{11},\enskip H_{24}=0_{3t\times 3t},\\
    &\textstyle H_{23}=-\frac{\Delta}{2m}\begin{bmatrix}
    0_{3t\times 3} & I_{3t}
    \end{bmatrix}-\frac{\Delta}{2m}\begin{bmatrix}
    I_{3t} & 0_{3t\times 3}
    \end{bmatrix}, \\
    & H_{31}=H_{32}=0_{3t\times 3(t+1)}, \enskip H_{33}=H_{11},\\
    & H_{34}=I_{3t},\enskip H_{41}=H_{42}=0_{(t+1)\times 3(t+1)}, \\ &H_{43}=I_{t+1}\otimes \begin{bmatrix}
    0 & 0 & 1
    \end{bmatrix}, \enskip H_{44}=0_{(t+1)\times 3t}.
\end{aligned}
\end{equation}
With the definition in \eqref{eqn: traj var} and \eqref{eqn: H blocks}, we can rewrite the linear equality and inequality constraints in optimization~\eqref{opt: trigger}--which include the linear dynamics constraints and the linear lower bound constraints on the thrust vectors--equivalently as follows:
\begin{equation}\label{eqn: linear constr}
    \underbrace{\begin{bmatrix}
    H_{11} & H_{12} & H_{13} & H_{14}\\
    H_{21} & H_{22} & H_{23} & H_{24}\\
    H_{31} & H_{32} & H_{33} & H_{34}\\
    H_{41} & H_{42} & H_{43} & H_{44}\\
    \end{bmatrix}}_{H}x-\underbrace{\begin{bmatrix}
    0_{9t}\\
    \underline{\gamma}1_{t+1}
    \end{bmatrix}}_b\in \underbrace{\{0_{9t}\}\times \mathbb{R}_+^{t+1}}_{\mathbb{K}},
\end{equation}
Note that \(\underline{\gamma}\in\mathbb{R}_+\) is the thrust lower bound introduced in \eqref{eqn: u lower bound}.

Third, we define the following closed convex set
\begin{equation}\label{eqn: D_i}
    \mathbb{D}_i=\mathbb{H}_i\times \mathbb{V}\times\mathbb{U}_a\times \mathbb{W}, 
\end{equation}
for all \(i=1, 2, \ldots, l\), where set \(\mathbb{H}_i\), \(\mathbb{V}\), \(\mathbb{W}\) are given in \eqref{eqn: cylinder}, \eqref{eqn: v ball}, \eqref{eqn: w ball}, respectively; set \(\mathbb{U}_a\) is given by \eqref{eqn: u icecream}.

Notice the only difference between set \(\mathbb{U}_a\cap\mathbb{U}_b\) and set \(\mathbb{U}_a\) is that the latter does not include the linear lower bound constraint in \(\mathbb{U}_b\); this constraint is already included in the last \(t+1\) linear inequality constraints in \eqref{eqn: linear constr}. With these sets, we can compactly rewrite the second-order-cone constraints in optimization~\eqref{opt: trigger}--which include those for position, velocity, thrust, and thrust rate vectors--as follows:
\begin{equation}\label{eqn: SOC constr}
    x\in\underbrace{(\mathbb{D}_1)^{\tau_1}\times (\mathbb{D}_2)^{\tau_2}\times \cdots \times (\mathbb{D}_l)^{\tau_l}}_{\mathbb{D}},
\end{equation}
where \((\mathbb{D}_i)^{\tau_i}\) is the Cartesian product of \(\tau_i\) copies of set \(\mathbb{D}_i\).

With the above definition, we can now rewrite optimization~\eqref{opt: trigger} equivalently as optimization~\eqref{opt: conic}, where matrix \(P\) is given in \eqref{eqn: quad obj}; matrix \(H\), vector \(b\), cone \(\mathbb{K}\) are given in \eqref{eqn: linear constr}; set \(\mathbb{D}\) is given in \eqref{eqn: SOC constr}.

\vspace{1em}
\noindent\fbox{
\centering
\parbox{0.94\linewidth}{
{\bf{Conic optimization}}
\begin{equation}\label{opt: conic}
    \begin{array}{ll}
        \underset{x}{\mbox{minimize}} &  \frac{1}{2} x^\top P x\\
        \mbox{subject to} & Hx-b\in\mathbb{K}, \enskip x\in\mathbb{D}.
    \end{array}
\end{equation}
}
}
\vspace{1em}

\begin{figure}[!ht]
    \centering
    \includegraphics[width=0.35\linewidth]{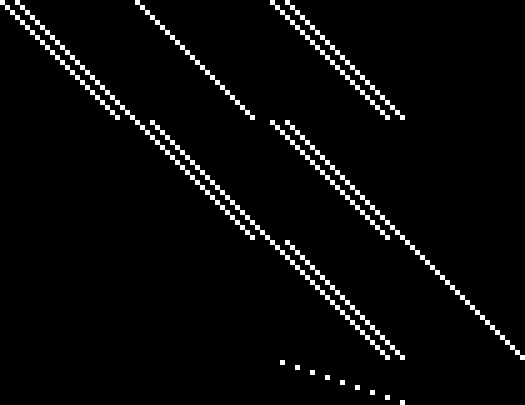}
    \caption{The sparsity pattern of matrix \(H\) in \eqref{eqn: linear constr}. Each zero and nonzero entry corresponds to a black and white pixel, respectively.}
    \label{fig: sparse}
\end{figure}

Optimization~\eqref{opt: conic} has two salient features: the sparsity pattern of matrix \(P\) and \(H\), and the geometric structure of set \(\mathbb{D}\). First, matrix \(P\) is diagonal, and matrix \(H\) has many zero elements; see Fig.~\ref{fig: sparse} for an illustration. The presence of these zero elements is because the dynamics constraints in \eqref{sys: DT} only apply to variables corresponding to adjacent time steps. Second, set \(\mathbb{D}\) is a Cartesian product of many simple sets, such as cylinder, ball, or the intersection of an icecream cone and a ball. See Fig.~\ref{fig: sets} for an illustration.

\begin{figure}[!ht]
    \centering
    \begin{subfigure}[b]{0.49\columnwidth}
    \centering 
    \begin{tikzpicture}[scale=0.7]
    \coordinate (a) at (45:0.8);
    \coordinate (b) at (225:0.8);
    \coordinate (a1) at ($(a)+(135:0.75)$);
    \coordinate (a2) at ($(a)+(-45:0.75)$);
    \coordinate (b1) at ($(b)+(135:0.75)$);
    \coordinate (b2) at ($(b)+(-45:0.75)$);
    
    \draw [blue,fill=blue!5,rotate around={-45:(0,0)}] (a1) -- (b1) arc (180:360:0.75 and 0.35) -- (a2) arc (0:180:0.75 and 0.35);
    
    \draw[blue,dashed,rotate around={-45:(0,0)}] (b2) arc (0:180:0.75 and 0.35);
    \draw[blue,rotate around={-45:(0,0)}] (a1) arc (180:360:0.75 and 0.35);
    
\end{tikzpicture}
    \caption{Cylinder.}
    \label{fig: cylinder}
    \end{subfigure}
    \begin{subfigure}[b]{0.49\columnwidth}
    \centering 
   \begin{tikzpicture}[scale=0.7]
    \coordinate (a) at (0, 0);
    \coordinate (a1) at (-1, 0);
    \coordinate (a2) at (1, 0);
    
    \draw[blue,fill=blue!5] (a) circle (1);
    
    \draw[blue]  (a1) arc (180:360: 1 and 0.2);
    \draw[blue,dashed] (a2) arc (0:180: 1 and 0.2);
    
\end{tikzpicture}
    \caption{Ball.}
    \label{fig: ball}
    \end{subfigure}
    \begin{subfigure}[b]{\columnwidth}
    \centering 
    \begin{tikzpicture}[scale=0.7]

    \coordinate (a) at (0, 0);
    \coordinate (a1) at (60:2);
    \coordinate (a2) at (120:2);
    
    \draw[blue,fill=blue!5] (a) -- (a1) arc (30:150:1.155) -- (a);
    \draw[blue,dashed] (a1) arc (0:180:1 and 0.2);
    \draw[blue] (a2) arc (180:360:1 and 0.2);

\end{tikzpicture}
    \caption{The intersection of an icecream cone and a ball.}
    \label{fig: icecream}
    \end{subfigure}
    \caption{An illustration of the geometric structure of the simple sets that constitute the set \(\mathbb{D}\) in \eqref{eqn: SOC constr}. }
    \label{fig: sets}
\end{figure}
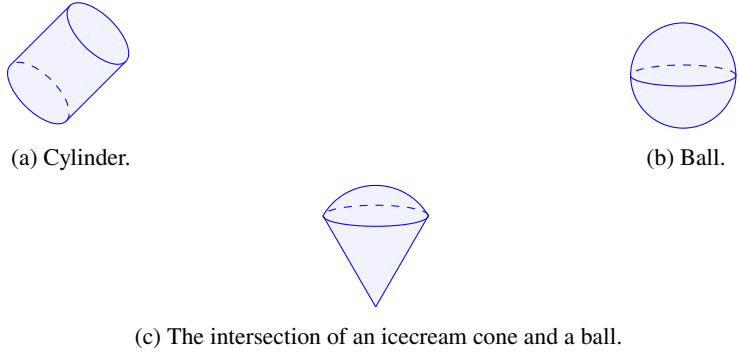

\subsection{Proportional-integral projected gradient method}
\label{subsec: PIPG}

To exploit the salient features of optimization~\eqref{opt: conic}, we propose to use the proportional-integral projected gradient method (PIPG). PIPG is a state-of-the-art first-order primal-dual optimization method that combines the idea of projected gradient method and proportional-integral feedback of constraint violation; such a combination was first introduced in distributed optimization  \cite{yu2020mass,yu2020rlc} and later extended to optimal control problems \cite{yu2020proportional,yu2021proportionalA,yu2021proportionalB,yu2022extrapolated}. 

Algorithm~\ref{alg: PIPG} is the pseudocode implementation of PIPG with extrapolation \cite{yu2022extrapolated}, where \(\Pi_{\mathbb{D}}\) and \(\Pi_{\mathbb{K}}\) denote the Euclidean projection map onto set \(\mathbb{D}\) and the polar cone of cone \(\mathbb{K}\), respectively; these projection maps will be discussed in details later. The if-clause between line~\ref{alg: inf start} and line~\ref{alg: inf end} determines whether optimization~\eqref{opt: conic} is infeasible by monitoring the difference between two consecutive iterates \cite{yu2022extrapolated}. 

\begin{algorithm}[!ht]
\caption{PIPG for convex trajectory optimization}
\begin{algorithmic}[1]
\Require Parameters in optimization~\eqref{opt: conic}, number of iterations \(j_{\max}\), step sizes \(\alpha, \beta, \lambda\), feasibility tolerance \(\epsilon\)\label{alg: pipg start}
\State Randomly initialize \(x,\overline{x}\in\mathbb{R}^{12t+9}\), \(y,\overline{y}\in\mathbb{R}^{10t+1}\). 
\For{\(j=1, 2, \ldots, j_{\max}\)}
\State{\(y^-\gets y\)}
\State{\(x\gets\Pi_{\mathbb{D}}[\overline{x}-\alpha(P\overline{x}+H^\top \overline{y})]\)}\label{alg: proj D}
\State{\(y\gets \Pi_{\mathbb{K}^\circ}[\overline{y}+\beta(H(2x-\overline{x})-g)]\)}\label{alg: proj K polar}
\State{\(\overline{x}\gets(1-\lambda)\overline{x}+\lambda x\)}
\State{\(\overline{y}\gets(1-\lambda)\overline{y}+\lambda y\)}
\EndFor\label{alg: pipg end}
\If{\(\frac{\norm{y-y^-}}{\beta\lambda \norm{x}}\leq \epsilon\)}\label{alg: inf start}
\State{\Return{\(x\)}}
\Else 
\State{\Return{``Infeasible"}}
\EndIf \label{alg: inf end}
\Ensure \(x\) or ``Infeasible".
\end{algorithmic}
\label{alg: PIPG}
\end{algorithm}

Compared with other numerical methods for optimization~\eqref{opt: conic}, PIPG has the following advantages. First, PIPG does not compute the inverse of any matrices or solve any linear equation systems, making it suitable for real-time implementation with light digital footprints \cite{yu2021proportionalB}. Second, compared with other first-order methods, PIPG achieves the fastest convergence rates in terms of both the primal-dual gap and constraint violation \cite{yu2021proportionalA}. Third, PIPG automatically generates proof of infeasibility if possible \cite{yu2021proportionalB,yu2022extrapolated}. When solving optimal control problems, PIPG is much faster than many state-of-the-art optimization solvers in numerical experiments \cite{yu2022extrapolated}.

\subsection{Implementation}
\label{subsec: implementation}

In order to implement Algorithm~\ref{alg: PIPG}, we need to determine several algorithm parameters, and efficiently compute the projections in line~\ref{alg: proj D} and line~\ref{alg: proj K polar} of Algorithm~\ref{alg: PIPG}. We will discuss these implementation details in the following.

\subsubsection{Parameter selection}
\paragraph{Step sizes} The iterates of PIPG converge if parameter \(\alpha\) and \(\beta\) satisfy the following constraint, which is a special case of those in \cite[Rem. 1]{yu2022extrapolated}:
\begin{equation}
     0<\alpha=\beta<\frac{2}{\sqrt{\mnorm{P}^2+4\mnorm{H}^2}}.
\end{equation}
By using the definition of matrix \(P\) in \eqref{eqn: quad obj}, one can verify that \(\mnorm{P}=\max\{1, \omega\}\), where \(\omega\) is the weighting parameter in the objective function in optimization~\eqref{opt: trigger}. As for the value of \(\mnorm{H}^2\), we compute an approximate of it using the \emph{power iteration algorithm} \cite{kuczynski1992estimating}, summarized in Algorithm~\ref{alg: power}. 

\begin{algorithm}[!ht]
\caption{The power iteration method \cite{kuczynski1992estimating} }
\begin{algorithmic}[1]
\Require Matrix \(H\in\mathbb{R}^{(10t+1)\times (12t+9)}\), accuracy tolerance \(\epsilon\).
\State Randomly initialize \(x\in\mathbb{R}^{12t+9}\); let \(\sigma=\norm{x}, \sigma^-=\epsilon+\sigma\). 
\While{\(|\sigma-\sigma^-|\geq \epsilon\)}
\State{\(\sigma^-\gets \sigma\)}
\State{\(y\gets \frac{1}{\sigma}Hx\)}
\State{\(x\gets H^\top y\)}
\State{\(\sigma\gets \norm{x}\)}
\EndWhile
\Ensure \(\sigma\).
\end{algorithmic}
\label{alg: power}
\end{algorithm}

As for parameter \(\lambda\)--which denotes the step length of extrapolation in PIPG \cite{yu2022extrapolated}--  numerical experiments shows that values between \(1.6\) and \(1.9\) leads to the best convergence performance in practice \cite{yu2022extrapolated}. In our implementation, we let \(\lambda=1.9\). 

\paragraph{Maximum number of iteration and feasibility tolerance}
As a first order method, PIPG tends to converge within hundreds of iterations. In the implementation of Algorithm~\ref{alg: PIPG}, we set \(j_{\max}=10^4\) and \(\epsilon=10^{-3}\).

\subsubsection{Computing the projections}

We now provide explicit formulas for computing the projections onto the closed convex sets that constitute the set \(\mathbb{D}\) in \eqref{eqn: SOC constr}; see Fig.~\ref{fig: sets} for an illustration. For projection formulas of other closed convex sets, such as the cone \(\mathbb{K}\) in \eqref{eqn: linear constr}, we refer the interested readers to \cite[Chp. 29]{bauschke2017convex}.

\paragraph{Cylinder} Given a position vector \(r\in\mathbb{R}^3\), the projection of \(r\) onto the set \(\mathbb{H}_i\) in \eqref{eqn: cylinder} is given as follows \cite[Exe. 29.1]{bauschke2017convex}:
\begin{equation}
    \Pi_{\mathbb{H}_i}[r]=\max(-\eta_i, \min(\eta_i, r_a))+\frac{\rho_i}{\max(\norm{r_b}, \rho_i)}r_b,
\end{equation}
where 
\begin{equation}
    r_a=\langle d_i, r\rangle d_i, \enskip r_b = r-\langle d_i, r\rangle d_i.
\end{equation}
\paragraph{Ball} Given a velocity vector \(v\in\mathbb{R}^3\), the projection of \(v\) onto the set \(\mathbb{V}\) in \eqref{eqn: v ball} is given as follows \cite[Prop. 29.10]{bauschke2017convex}:
\begin{equation}\label{eqn: vel ball proj}
    \Pi_{\mathbb{V}}[v]=\frac{\xi}{\max(\xi, \norm{v})}v.
\end{equation}
The projection onto set \eqref{eqn: w ball} is similar.
\paragraph{The intersection of a ball and an icecream cone}
Computing a projection onto the intersection of a icecream cone and a ball is the same as first computing a projection onto the icecream cone then computing a projection onto the ball \cite[Thm. 7.1]{bauschke2018projecting}. In particular, give a thrust vector \(u\in\mathbb{R}^3\), the projection of \(u\) onto the set \(\mathbb{U}_a\) in \eqref{eqn: u icecream} is given by
\begin{equation}\label{eqn: thrust ball proj}
    \Pi_{\mathbb{U}_a}[u]=\frac{\overline{\gamma}}{\max(\overline{\gamma}, \norm{u_a})}u_a,
\end{equation}
where
\begin{equation}\label{eqn: thrust cone proj}
    u_a=\begin{cases}
    u, \quad \text{if } \cos\lambda\norm{u}\leq [u]_3,\\
    0, \quad \text{if } \sin\lambda\norm{u}\leq -[u]_3,\\
    \langle u, u_b\rangle u_b, \quad \text{otherwise,}
    \end{cases}
\end{equation}
and 
\begin{equation}
    u_b=\begin{bmatrix}
    0\\
    0\\
    \cos\lambda
    \end{bmatrix}+\frac{\sin\lambda}{\sqrt{([u]_1)^2+([u]_2)^2}}\begin{bmatrix}
    [u]_1\\
    [u]_2\\
    0
    \end{bmatrix}.
\end{equation}
The formula in \eqref{eqn: thrust ball proj} is similar to that in \eqref{eqn: vel ball proj}. The formula in \eqref{eqn: thrust cone proj} is a special case of the projection formula of an icecream cone \cite[Exe. 29.12]{bauschke2017convex}.
\section{Numerical simulation and indoor flight experiments}
\label{sec: experiment}

We demonstrate the efficiency of Algorithm~\ref{alg: PIPG} by comparing its computation time against the state-of-the-art optimization solvers, and demonstrate the effectiveness of the trajectories computed by Algorithm~\ref{alg: PIPG} using indoor flight experiments via a custom quadrotor.

\subsection{Numerical simulation with randomly generated corridors}

We first evaluate the efficiency of the algorithms developed in Section~\ref{sec: trigger} and Section~\ref{sec: PIPG} using instances of optimization~\eqref{opt: nonconvex} with randomly generated corridors as follows. First, we let
\begin{equation}
    \overline{r}_0=\overline{v}_0=\overline{v}_f=\begin{bmatrix} 0 & 0 & 0\end{bmatrix}^\top, \enskip \overline{u}_f=-g.
\end{equation}
Second, we set the scalar parameters in optimization~\eqref{opt: nonconvex} using the values listed in Table~\ref{tab: quadrotor}. Third, we generate 100 random sequences of corridors, see Fig.~\ref{fig: dataset} for an illustration of the center lines of these corridor sequences. Each sequence contains 7 corridors. Each corridor starts at the origin and is uniquely characterized by four scalar parameters: radius, length, and two angles that defines its direction in a spherical coordinates--azimuthal angle and elevation angle. Each scalar parameter is sampled from a uniform distribution over an interval, see Table~\ref{tab: corridor} for the interval bounds of these parameters. Finally, we vary the number of corridors traversed by the trajectory by setting the final position \(\overline{r}_f\) to be the end point of different corridors in each sequence. 
\begin{figure}[!ht]
    \centering
    \includegraphics[width=0.45\columnwidth]{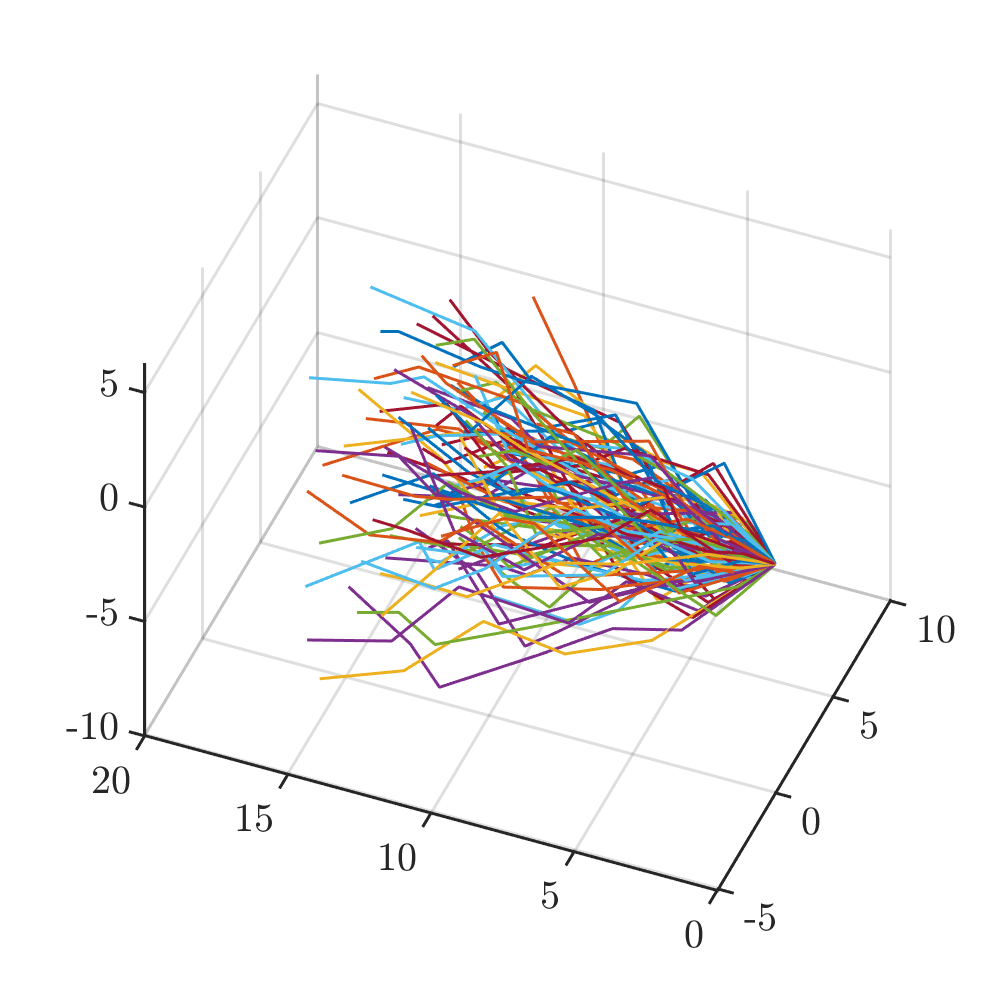}
    \caption{The center lines of the 100 random sequences of corridors. }
    \label{fig: dataset}
\end{figure}

\begin{table}[!ht]
    \centering
     \caption{The parameter values in optimization~\eqref{opt: nonconvex} (all units are omitted for simplicity)}
    \begin{tabular}{  c|c|c|c|c|c|c|c } 
 \hline
 \(m\) & \(\Delta\) & \(\omega\) & \(\xi\) & \(\underline{\gamma}\) &  \(\overline{\gamma}\) & \(\theta\) & \(\delta\)\\
 \hline \hline
 0.35 & 0.20 & 1.00 & 3.00 & 2.00 & 5.00 & \(\frac{\pi}{4}\) & 3.00\\ 
\hline
\end{tabular}
    \label{tab: quadrotor}
\end{table}

\begin{table}[!ht]
    \centering
     \caption{The interval bounds of the corridor parameters.}
    \begin{tabular}{  m{7em} | m{5em} } 
    \hline
 parameter & interval\\    
 \hline \hline
 radius & \([0.10, 0.50]\) \\

 length & \([1.00, 4.00]\) \\
 
 azimuthal angle & \([\frac{\pi}{4}, \frac{3\pi}{4}]\)\\

 elevation angle & \([-\frac{\pi}{4}, \frac{\pi}{4}]\)\\
 \hline
\end{tabular}
    \label{tab: corridor}
\end{table}

We demonstrate the performance of Algorithm~\ref{alg: trigger} using the aforementioned random instances of optimization~\eqref{opt: nonconvex}, where we use Algorithm~\ref{alg: PIPG} for infeasibility detection and optimizing a trajectory with time-varying corridor constraints. We implement the combination of Algorithm~\ref{alg: trigger} and Algorithm~\ref{alg: PIPG} in C++; see \url{https://github.com/Kartik-Nagpal/PIPG-Cpp} for details. We choose the values of time sequence \(\overline{\tau}_{[1, l]}\) and \(\underline{\tau}_{[1, l]}\) in Algorithm~\ref{alg: trigger} using the length of each corridor and the quadrotor's maximum speed, given by \(\xi\); and a coarse estimates of its minimum speed, given by \(\xi/2\).

Fig.~\ref{fig: time & cost} shows the computation time and solution quality of Algorithm~\ref{alg: trigger} combined with Algorithm~\ref{alg: PIPG}, and compares them against the performance of various combinations of Algorithm~\ref{alg: trigger}, mixed integer programming (MIP), off-the-shelf parser YALMIP \cite{lofberg2004yalmip}, commercial conic optimization solver GUROBI \cite{gurobi}, and open-source conic optimization solver ECOS \cite{domahidi2013ecos}. All numerical experiments are executed on a desktop computer equipped with the AMD Ryzen 9 5900X 12 Core Processor. Overall the combination of Algorithm~\ref{alg: trigger} and Algorithm~\ref{alg: PIPG} is about 50--200 times faster than the MIP approach as well as the combination of Algorithm~\ref{alg: trigger} and off-the-shelf solvers, at the price of at most a \(10\%\) increase in the cost function value. 

\begin{figure}[!ht]
\centering
  \begin{subfigure}{0.45\columnwidth}
  \includegraphics[trim=0.1cm 0.1cm 0.1cm 0.1cm,width=\textwidth]{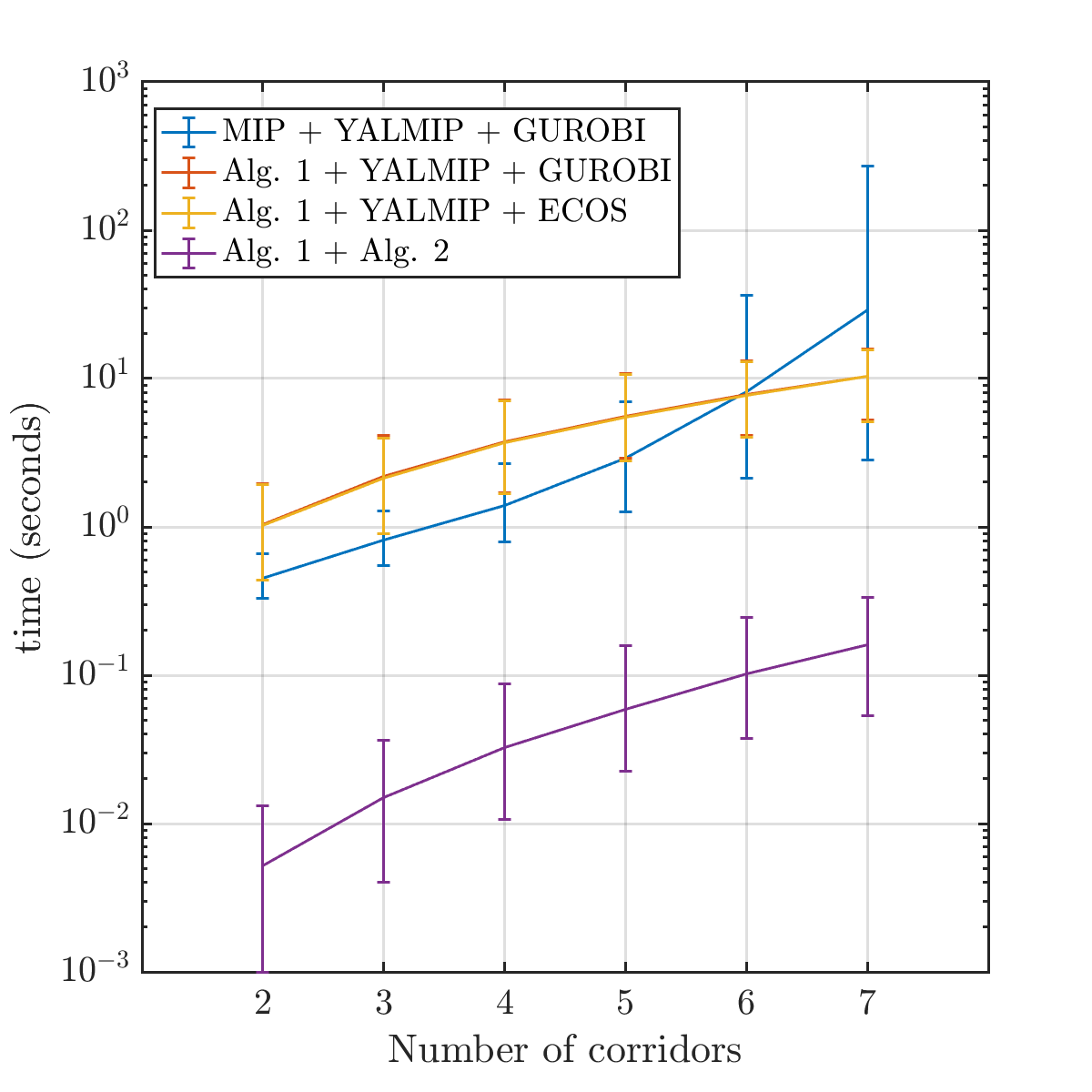}
  \caption{Computation time}\label{fig: time}
  \end{subfigure}
  \begin{subfigure}{0.45\columnwidth}
  \includegraphics[trim=0.1cm 0.1cm 0.1cm 0.1cm,width=\textwidth]{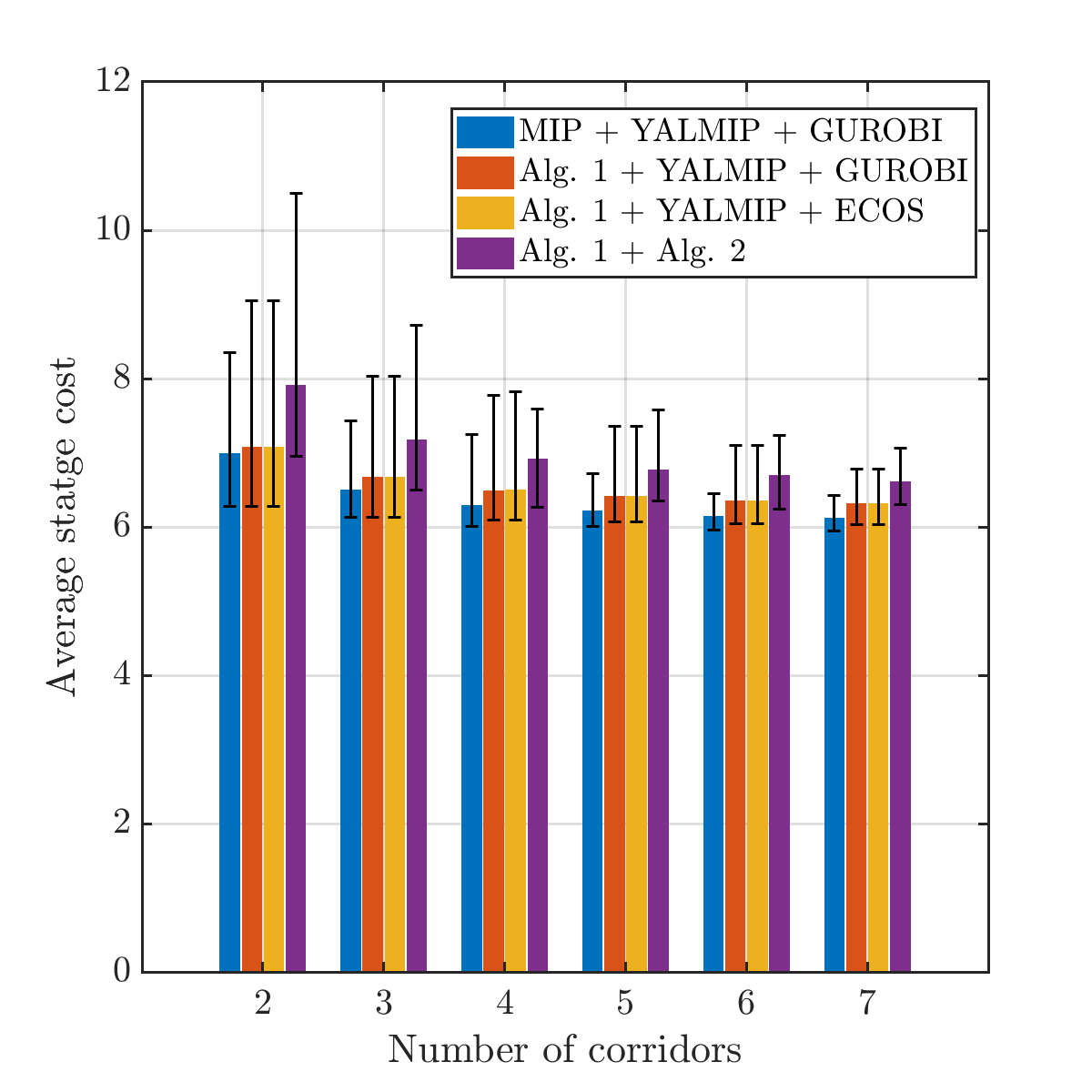}
  \caption{Trajectory cost divided by trajectory length.} \label{fig: C matrix}
  \end{subfigure} 
  \caption{The comparison of the computation time and the average state cost--which equals the objective function in optimization~\eqref{opt: nonconvex} divided by trajectory length \(t\)--of the trajectories computed by different solvers for optimization~\eqref{opt: nonconvex}, averaged over 100 randomly generated scenarios. The error bar shows the maximum and minimum value.  }
  \label{fig: time & cost}
\end{figure}

\subsection{Indoor flight experiments with hoop obstacles}

We demonstrate the application of Algorithm~\ref{alg: trigger} and Algorithm~\ref{alg: PIPG} using the quadrotor platform in the Autonomous Control Laboratory ( see \url{https://depts.washington.edu/uwacl/}). This platform contains a custom-made quadrotor equipped with a  2200-milliAmp-hour lithium-polymer battery; accelerometers and gyroscopes that measure the acceleration and the angular velocity, respectively, at a 100-1000 Hz rate; a 500 MHz dual-core Intel Edison and a 1.7 GHz quad-core Intel Joule processor; and an IEEE 802.11n compliant WiFi communication link. See Fig.~\ref{fig: acl quad} for an illustration. The platform also include an 4 meters by 7 meters by 3 meters indoor flight space, equipped with an OptiTrack motion capture system that can measure the attitude and position of a quadrotor at 50-150 Hz rate. 

We conduct the quadrotor flight experiments using the trajectories computed by Algorithm~\ref{alg: trigger} and Algorithm~\ref{alg: PIPG} as reference guidance. We also use hoop obstacles to mark out the boundary of each flight corridor. Fig.~\ref{fig: flight exp} shows the reference trajectories and experiment trajectories in three different corridor scenarios\footnote{To ensure flight safety, we use a reduced hoop radius (about 20\% of the actual size) when computing the flight trajectories.}. These experiments demonstrate how to use the proposed approach in actual flight experiments in cluttered environments.

\begin{figure}[!ht]
    \centering
    \input{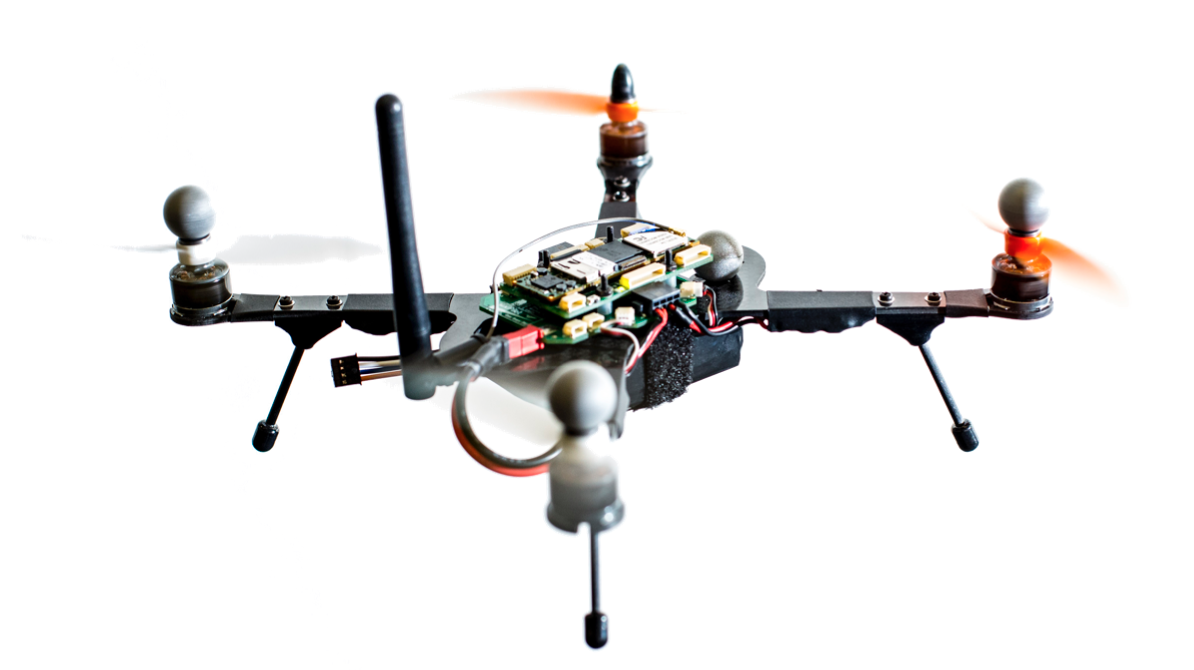}
    \caption{The custom quadrotor in the Autonomous Control Laboratory.}
    \label{fig: acl quad}
\end{figure}

\begin{figure}
    \centering
    \includegraphics[width=0.4\textwidth]{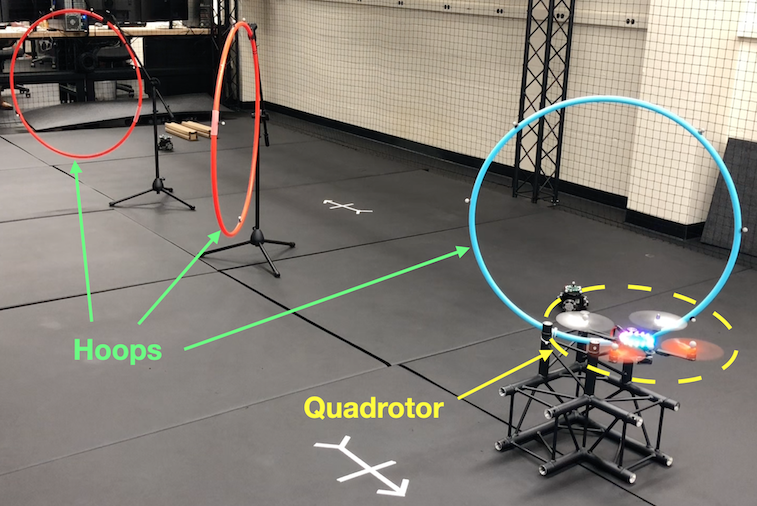}
    \caption{The indoor flight environment with hoop obstacles.}
    \label{fig: acl lab}
\end{figure}

\begin{figure}[!ht]
\centering
  \begin{subfigure}{0.49\columnwidth}
  \includegraphics[trim=0.1cm 0.1cm 0.1cm 0.1cm,width=\textwidth]{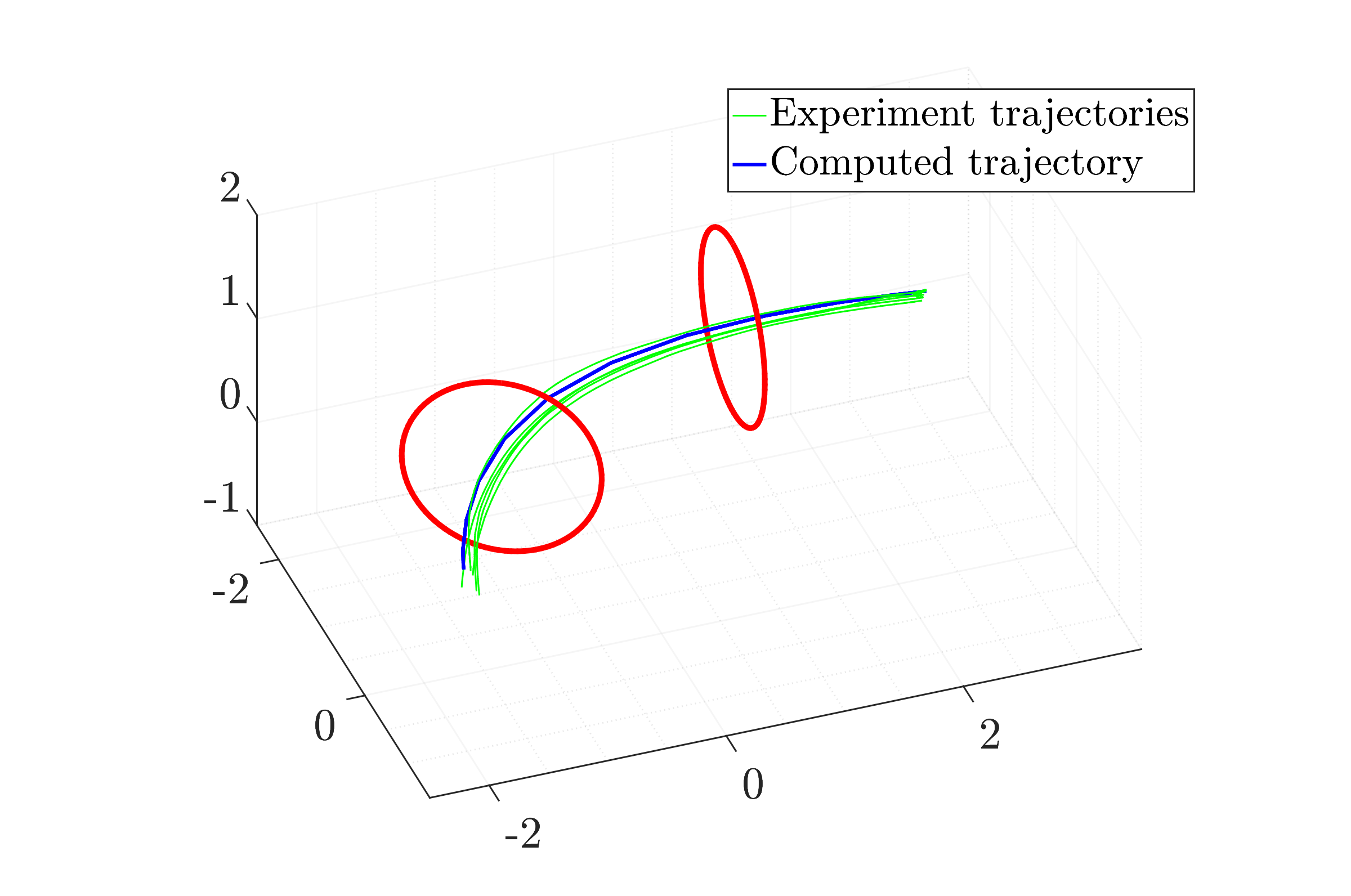}
  \caption{Scenario 1.}
  \end{subfigure}
  \begin{subfigure}{0.49\columnwidth}
  \includegraphics[trim=0.1cm 0.1cm 0.1cm 0.1cm,width=\textwidth]{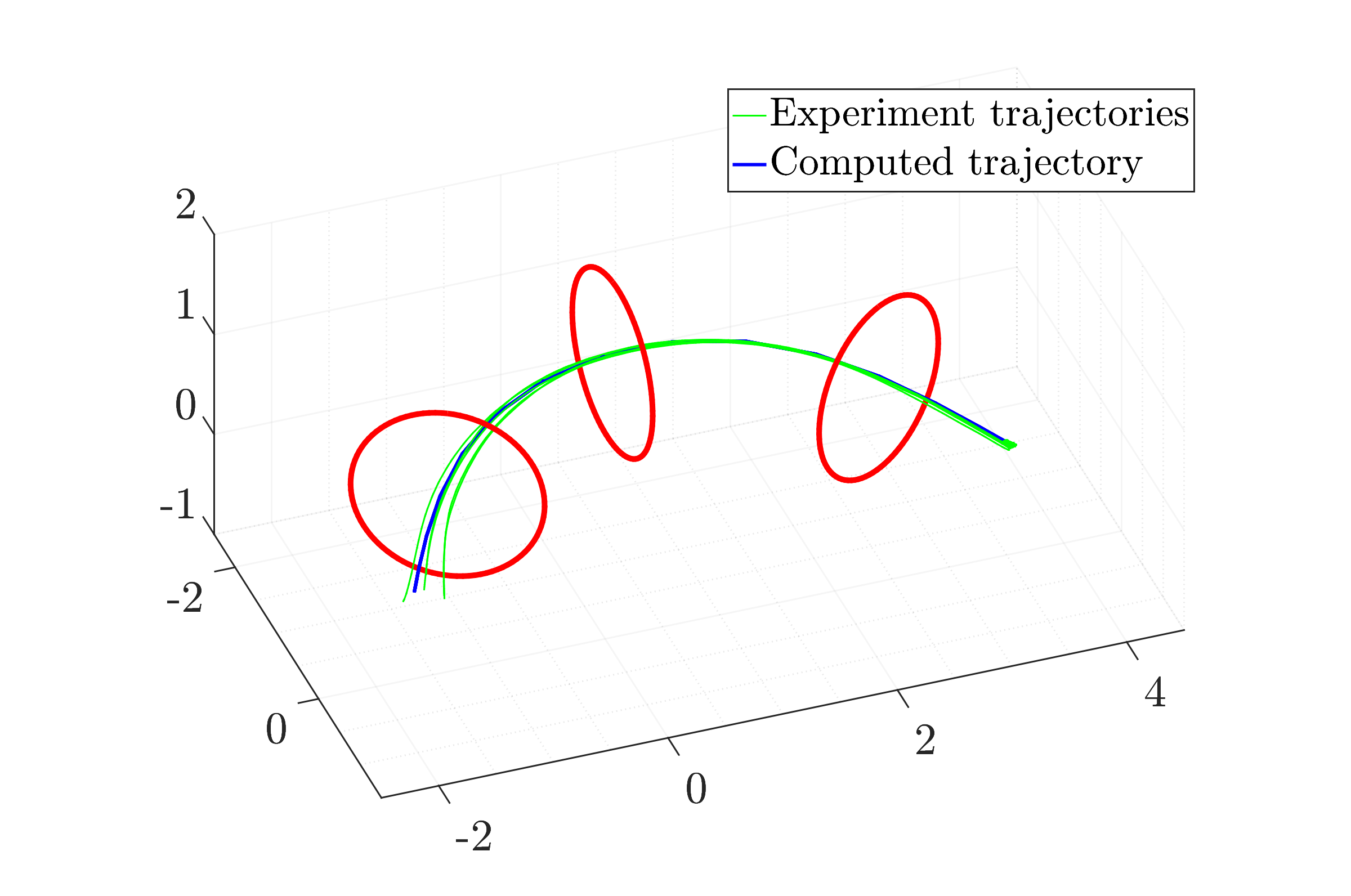}
  \caption{Scenario 2.}
  \end{subfigure} 
  \begin{subfigure}{0.49\columnwidth}
  \includegraphics[trim=0.1cm 0.1cm 0.1cm 0.1cm,width=\textwidth]{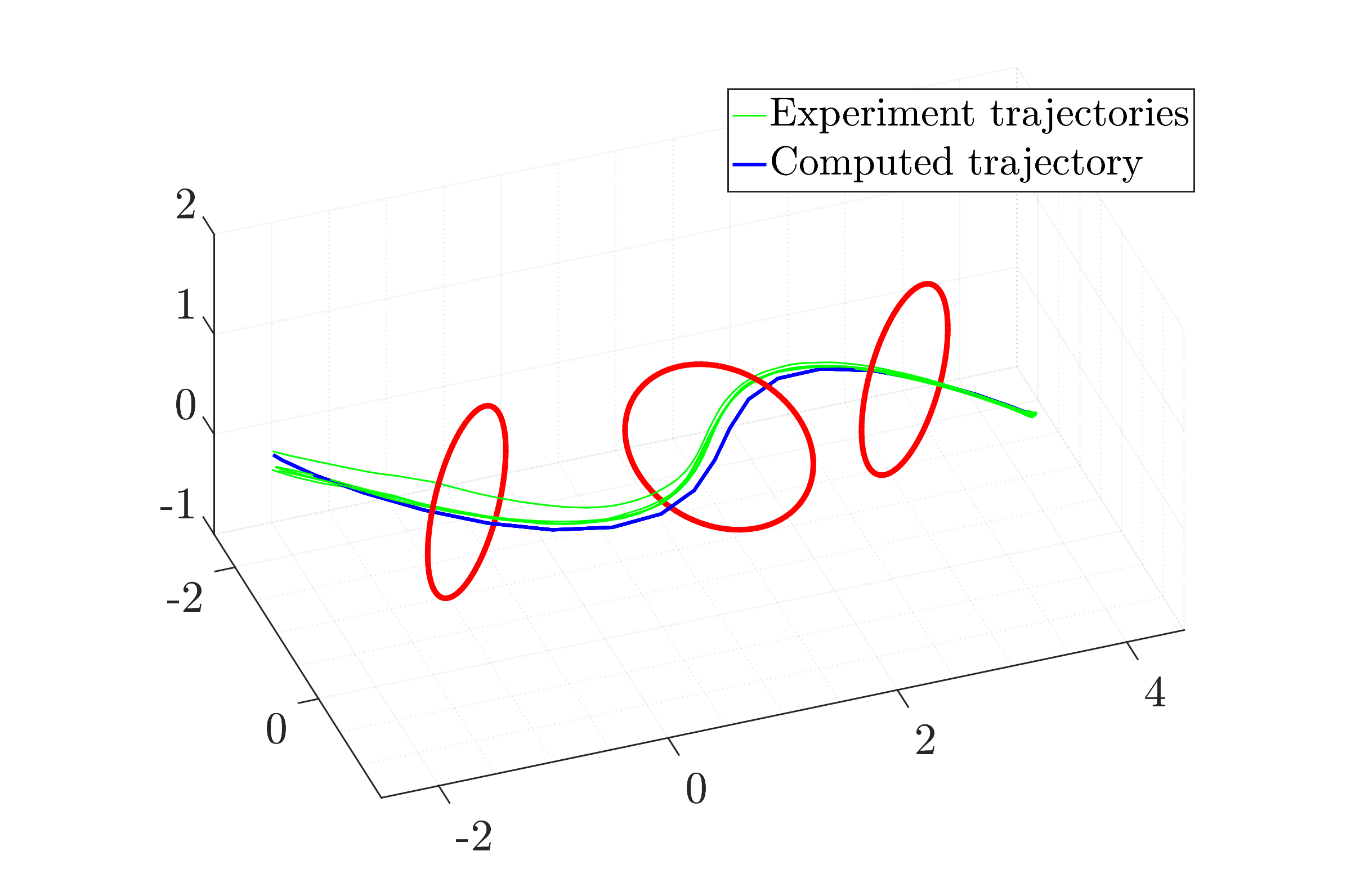}
  \caption{Scenario 3.}
  \end{subfigure} 
  \caption{Reference trajectory computed by Algorithm~\ref{alg: trigger} and Algorithm~\ref{alg: PIPG} and the measured flight trajectories in experiments. For each scenario, we showcase the measured trajectories in five separate flight experiments.}
  \label{fig: flight exp}
\end{figure}

\section{Conclusion}
\label{sec: conclusion}

We introduce a novel bisection method that approximates the nonconvex corridor constraints using time-triggered convex corridor constraints, and develop customized implementation of this method that enables real-time trajectory optimization subject to second-order cone constraints. Our results provide a novel benchmark solution approach for trajectory optimization, which is about 50--200 times faster than mixed integer programming in numerical experiments. Future direction includes onboard implementation and extensions to trajectory optimization with nonlinear dynamics model, such as six-degree-of-freedom rigid body dynamics for space vehicles \cite{malyuta2021advances}.



\bibliography{reference}

\end{document}